\newtheorem{theorem}{Theorem}[section]
\newtheorem{lemma}[theorem]{Lemma}
\newtheorem{corollary}[theorem]{Corollary}
\theoremstyle{definition}
\newtheorem{definition}[theorem]{Definition}
\theoremstyle{remark}
\newtheorem{remark}[theorem]{Remark}
\numberwithin{equation}{section}
\numberwithin{theorem}{section}
\DeclareMathOperator{\vol}{vol} \DeclareMathOperator{\dist}{dist}
\DeclareMathOperator{\diam}{diam}
\newcommand{\Ric}{\mathrm{Ric}}
\newcommand{\be}{\begin{equation}}
\newcommand{\ee}{\end{equation}}
\newcommand{\ba}{\begin{eqnarray}}
\newcommand{\ea}{\end{eqnarray}}
\newcommand{\ban}{\begin{eqnarray*}}
\newcommand{\ean}{\end{eqnarray*}}
\newcommand{\IN}{\mathrm{IN}}
\newcommand{\SN}{\mathrm{SN}}
\begin{document}
\title[Isoperimetric Constant]{Neumann Isoperimetric Constant Estimate \\ for 
convex domains }
\author{Xianzhe Dai}
\address[Xianzhe Dai]{Department of Mathematics, ECNU, Shanghai and UCSB, Santa Barbara CA 93106 \\ email:dai@math.ucsb.edu}
\author{Guofang Wei}
\address[Guofang Wei]{Department of Mathematics, University of California, Santa Barbara CA 93106 \\ email: wei@math.ucsb.edu}
\author{Zhenlei Zhang}
\address[Zhenlei Zhang]{Department of Mathematics, Capital Normal University, China \\ email: zhleigo@aliyun.com}
\date{}
\subjclass{53C20}
\thanks{XD is partially supported by the Simons Foundation, NSF and NSFC, GW partially supported by the Simons Foundation and NSF DMS 1506393, ZZ partially supported by CNSF11371256 }
\begin{abstract}
We present a new and direct proof of the local Neumann isoperimetric inequality on convex domains of a Riemannian manifold with Ricci curvature bounded below.
\end{abstract}
\maketitle

\section{Introduction}

Isoperimetric and Sobolev inequalities are equivalent inequalities (see e.g. Theorem~\ref{ID-SD} below) which play important role in geometric analysis on manifolds. Indeed, doing analysis on manifolds usually depends on the estimate of the Sobolev constant which could then be obtained via the isoperimetric constant. For closed manifolds there are extensive work on  isoperimetric constant estimates. For domains, one usually needs to distinguish between two types of isoperimetric constants, the Dirichlet and the Neumann type. A  subtle point is that the
Neumann type isoperimetric inequality is harder to obtain and requires extra conditions on the boundary. For star-shaped domain in a manifold with Ricci curvature bounded from below, Buser \cite{Buser1982} obtained a Neumann  isoperimetric constant (the Cheeger constant) estimate (depends on in and out radius) using comparison geometry. For  domain with smooth and convex boundary, the known estimate for Neumann isoperimetric constant is very analytic and surprisingly indirect. Namely it is  through Li-Yau gradient estimate for heat kernel \cite{Li-Yau1986} and the equivalence of heat kernel bounds,  Sobolev inequality, isoperimetric inequality,see \cite[Page 448]{Saloff-Coste1992}. In this short note we give a geometric and direct proof of  a Neumann  isoperimetric inequality for convex domains (whose boundaries need not be smooth).  

First we recall some definitions.
\begin{definition}
When $M$ is compact (with or without boundary), the Neumann $\alpha$-isoperimetric constant of $M$ is defined by
\[
\IN_\alpha (M) = \sup_{H}  \frac{ \min \{ \vol (M_1), \vol (M_2) \}^{1-\frac{1}{\alpha}} }{\vol (H)}, \]
where $H$ varies over compact $(n-1)$-dim submanifold of $M$ which divides $M$ into two disjoint open submanifolds $M_1, M_2$ (with or without boundary).
\end{definition}

\begin{definition}
The Neumann $\alpha$-Sobolev constant of $M$ is defined by
\[
\SN_\alpha (M) = \sup_{f\in C^\infty(M)} \frac{\inf_{a \in \mathbb R}\|f-a\|_{\frac{\alpha}{\alpha-1}} }{ \|\nabla f\|_1}. \]
\end{definition}

The isoperimetric constant and Sobolev constant are equivalent.

\begin{theorem}[\cite{Cheeger1970}, see also \cite{Li}]  \label{ID-SD}
For all $n \le \alpha \le \infty$,
\[
\IN_\alpha (M) \ge \SN_\alpha (M) \ge  \frac 12 \, \IN_\alpha (M). \]
\end{theorem}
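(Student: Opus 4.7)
The plan is to prove both inequalities through Cheeger's original co-area argument, with the first direction using the isoperimetric inequality on level sets and the second direction using smoothed characteristic functions.

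For the direction $\IN_\alpha(M) \ge \SN_\alpha(M)$, given $f \in C^\infty(M)$ I would first subtract off a median $a_0$ of $f$, so that both $\vol(\{f > 0\})$ and $\vol(\{f < 0\})$ are $\le \tfrac12 \vol(M)$, and split $f = f_+ - f_-$. Writing $p = \alpha/(\alpha-1)$, the disjoint support of $f_+$ and $f_-$ gives $\|f\|_p^p = \|f_+\|_p^p + \|f_-\|_p^p$ and $\|\nabla f\|_1 = \|\nabla f_+\|_1 + \|\nabla f_-\|_1$. Applying Minkowski's integral inequality to the layer-cake identity $f_+(x) = \int_0^\infty \mathbf{1}_{\{f > t\}}(x)\, dt$ yields
\[
\|f_+\|_p \le \int_0^\infty \vol(\{f > t\})^{1/p}\, dt .
\]
Since every superlevel set has volume at most $\tfrac12 \vol(M)$, the definition of $\IN_\alpha$ gives $\vol(\{f > t\})^{1/p} \le \IN_\alpha \cdot \vol(\{f = t\})$ for a.e.\ regular value $t$. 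Integrating in $t$ and using the co-area formula $\int_0^\infty \vol(\{f = t\})\, dt = \|\nabla f_+\|_1$ yields $\|f_+\|_p \le \IN_\alpha \|\nabla f_+\|_1$, and likewise for $f_-$. The elementary inequality $(a^p + b^p)^{1/p} \le a + b$ for $p \ge 1$ then assembles these into $\|f - a_0\|_p \le \IN_\alpha \|\nabla f\|_1$, which is what is needed.

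For $\SN_\alpha(M) \ge \tfrac12 \IN_\alpha(M)$, given a dividing hypersurface $H$ with $\vol(M_1) \le \vol(M_2)$, I would construct a Lipschitz approximation $f_\epsilon$ of $\mathbf{1}_{M_1}$ that equals $1$ on the part of $M_1$ at distance $\ge \epsilon$ from $H$, equals $0$ on the analogous part of $M_2$, and is affine in $\dist(\,\cdot\,, H)/\epsilon$ in the $\epsilon$-tube around $H$. The co-area formula shows $\|\nabla f_\epsilon\|_1 \to \vol(H)$ as $\epsilon \to 0$. For the Sobolev side, any real $a$ satisfies either $|a| \ge \tfrac12$ or $|1-a| \ge \tfrac12$, so
\[
\|f_\epsilon - a\|_p^p \ge \max\!\left( |1-a|^p \bigl(\vol(M_1) - O(\epsilon)\bigr),\; |a|^p \bigl(\vol(M_2) - O(\epsilon)\bigr) \right)
\]
gives $\inf_{a} \|f_\epsilon - a\|_p \ge \tfrac12 \vol(M_1)^{1/p} (1 + o(1))$, using $\vol(M_1) \le \vol(M_2)$. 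Sending $\epsilon \to 0$ and taking the supremum over $H$ produces the claimed bound.

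The main obstacle I anticipate is the smoothing step. The hypersurface $H$ appearing in the definition of $\IN_\alpha$ need not be smooth, and $\dist(\,\cdot\,, H)$ is in general only Lipschitz, so $f_\epsilon$ as described lies in $W^{1,\infty}$ rather than $C^\infty$. This gap has to be bridged by a mollification argument showing that $C^\infty$ test functions recover the same supremum $\SN_\alpha(M)$, together with a careful justification of the co-area formula at the Lipschitz level. Once that regularity question is handled, the rest of the argument is purely combinatorial in volumes and levels.
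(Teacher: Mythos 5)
The paper does not supply its own proof of this theorem --- it is quoted from Cheeger and from Li's \emph{Geometric Analysis}, and your argument is precisely the standard Federer--Fleming/Cheeger co-area argument found there, so the approaches coincide. Both directions of your proof are essentially correct: the layer-cake plus Minkowski step for $\IN_\alpha \ge \SN_\alpha$ works as you say (one only needs Sard's theorem to get a.e.\ $t>0$ a regular value so that $\{f=t\}$ is an admissible dividing hypersurface with $\vol(\{f>t\}) \le \tfrac12\vol(M)$ after the median shift), and the two-sided estimate $\max(|a|,|1-a|)\ge \tfrac12$ for the cutoff competitor gives the factor $\tfrac12$ in $\SN_\alpha \ge \tfrac12 \IN_\alpha$. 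One small correction to your closing caveat: in the paper's definition the dividing set $H$ is a compact $(n-1)$-dimensional \emph{submanifold}, hence already smooth, so the only regularity you actually need to bridge is that $f_\epsilon$ is Lipschitz with corners rather than $C^\infty$; this is a routine mollification in a tubular neighborhood of $H$ and does not threaten the constant. The obstacle is therefore even milder than you anticipate, and the proof is sound.
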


For convenience we consider the normalized Neumann $\alpha$-isoperimetric and $\alpha$-Sobolev constant:
$$\IN^*_\alpha (M) = \IN_\alpha (M) \, \vol (M)^{1/\alpha}, \ \ \  \ \SN^*_\alpha (M) = \SN_\alpha (M) \, \vol (M)^{1/\alpha}. $$

Using comparison geometry and Vitali covering we give an estimate on the normalized Neumann isoperimetric constant for convex domain in terms of the Ricci curvature lower bound and the diameter of the domain.  

\begin{theorem}\label{Thm}
Let $(M,g)$ be a complete Riemannian manifold of dimension $n$, with $\Ric\ge-(n-1)K$ for some $K\ge 0$. Let $\Omega$ be a bounded convex domain. Then
\begin{equation}
\IN^*_n(\Omega)\le 40^ne^{11(n-1)\sqrt{K}d}\cdot d
\end{equation}
where $d$ is the diameter of the domain $\Omega$.  In particular, if $M$ is closed with diameter $d$, then
\begin{equation}
\IN^*_n(M)\le 40^ne^{11(n-1)\sqrt{K}d}\cdot d.
\end{equation}
\end{theorem}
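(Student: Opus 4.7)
The plan is to combine a local isoperimetric inequality on geodesic balls with a Vitali covering of $\Omega$, using the convexity of $\Omega$ (so minimizing geodesics between any two points of $\Omega$ stay inside $\Omega$) throughout. Fix $H$ dividing $\Omega = M_1 \sqcup M_2$ with $\vol(M_1)\le\vol(M_2)$; the aim is the lower bound
\[
\vol(H) \ge \frac{\vol(M_1)^{(n-1)/n}\vol(\Omega)^{1/n}}{40^n e^{11(n-1)\sqrt{K}d}\cdot d}.
\]

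\emph{Step 1 (Local inequality on a ball).} For $B = B(x,r)\subseteq M$ with $x\in\Omega$ and $r\le d$, the intersection $B\cap\Omega$ is geodesically convex, so Cheeger--Colding's segment inequality applies there. Applied to the pair $(M_1\cap B, M_2\cap B)$ with the indicator of an $\epsilon$-tubular neighborhood of $H$ and letting $\epsilon\to 0$, one obtains a local Cheeger bound $\min\{\vol(M_1\cap B),\vol(M_2\cap B)\}\le C(n)\,r\,e^{c(n)\sqrt{K}r}\vol(H\cap B)$. To upgrade to the $(n-1)/n$-power isoperimetric inequality on the ball, I would use Bishop--Gromov Jacobian comparison along $\exp_x$ to transfer the Euclidean local isoperimetric inequality to the Riemannian setting, at the cost of a multiplicative factor $e^{c'(n)\sqrt{K}r}$.

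\emph{Step 2 (Vitali covering and global assembly).} By Bishop--Gromov, pick a maximal $r/5$-separated net $\{x_i\}\subseteq\Omega$; then $\{B(x_i,r/10)\}$ are disjoint, $\{B(x_i,r)\}$ covers $\Omega$, and the covering multiplicity satisfies $N\le 40^n e^{c(n)\sqrt{K}r}$, the factor $40^n$ coming from the volume ratio between balls of radii $2r$ and $r/10$. Apply Step 1 on each $B_i$, and split balls into ``Type A'' (where $\vol(M_1\cap B_i)\le\vol(B_i)/2$) and ``Type B'' (otherwise). Summing on Type A and using the subadditivity $\sum a_i^{(n-1)/n}\ge(\sum a_i)^{(n-1)/n}$ bounds $\vol\bigl(M_1\cap\bigcup_{\text{Type A}} B_i\bigr)^{(n-1)/n}\le CN\vol(H)$. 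If the Type A sum recovers a constant fraction of $\vol(M_1)$, we are done; otherwise $\vol\bigl(M_1\cap\bigcup_{\text{Type B}}B_i\bigr)\ge\vol(M_1)/2$, and the bounded multiplicity forces $\vol\bigl(\bigcup_{\text{Type B}}B_i\bigr)\le 2N\vol(M_1)$, whereupon the symmetric local inequality on $M_2\cap B_i$ for Type B balls, together with convexity of $\Omega$, yields a comparable bound. Choosing $r$ of order $d$ and tracking all Bishop--Gromov factors should produce the claimed constant $40^n e^{11(n-1)\sqrt{K}d}\cdot d$.

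The main obstacle is Step 1: extracting the $(n-1)/n$-power local isoperimetric inequality rather than merely the Cheeger (linear) bound furnished by the segment inequality. The authors' geometric route avoids the standard detour through Li--Yau gradient estimates and heat-kernel bounds, and likely proceeds by establishing a Euclidean-style local isoperimetric inequality on geodesic balls directly, via Bishop--Gromov Jacobian comparison along $\exp_x$. A secondary difficulty is the Type B case of Step 2, where $M_1$ dominates locally while being globally minor; the convexity of $\Omega$ is essential to close the argument, and the specific exponent $11$ in $e^{11(n-1)\sqrt{K}d}$ should emerge from combining the covering multiplicity, the local Jacobian comparison, and the segment inequality factors.
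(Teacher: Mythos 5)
Your proposal has the right broad flavor (a local, ball-level estimate derived from a segment-type inequality, assembled over a Vitali covering, using convexity so that geodesics stay in $\Omega$), but there is a genuine gap at exactly the place you flag as the ``main obstacle,'' and the paper's route sidesteps it with a different and crucial idea that your outline is missing.

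The gap is Step~1: you want a \emph{local $(n-1)/n$-power Neumann isoperimetric inequality on a ball}, of the form $\min\{\vol(M_1\cap B),\vol(M_2\cap B)\}^{(n-1)/n}\le C\,\vol(H\cap B)$. This is essentially the statement you are trying to prove, restricted to a smaller convex region; the segment inequality (or Gromov's observation, which is what the paper actually uses) only yields the \emph{linear} (Cheeger) bound $\min\{\vol(M_1\cap B),\vol(M_2\cap B)\}\le C\,r\,e^{c\sqrt{K}r}\vol(H\cap B_{2r})$, and Bishop--Gromov Jacobian comparison along $\exp_x$ does not upgrade a Neumann-type estimate to a power inequality (the difficulty is that $H\cap B$ may be tiny even when both pieces have comparable volume unless you control $r$). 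The paper never proves a power inequality at the local level. Instead, for each $x\in\Omega_1$ (with $\vol(\Omega_1)\le\vol(\Omega_2)$) it chooses a \emph{variable} radius $r_x$ to be the smallest radius at which $H$ cuts $B_{r_x}(x)\cap\Omega$ into two pieces of equal volume; this exists and satisfies $r_x\le d$. With this choice, the linear Cheeger bound on $B_{r_x}(x)$ captures the full ball volume $\vol(B_{r_x}(x)\cap\Omega)$, not just a possibly tiny min, and the Type~A/Type~B split you worry about in Step~2 never arises. The $(n-1)/n$-power only emerges at the very end, after applying Vitali to the family $\{B_{2r_x}(x)\}$ to get disjoint $B_{2r_{x_i}}(x_i)$ with $\bigcup_i B_{10r_{x_i}}(x_i)\supset\Omega_1$: one bounds
\[
\frac{\vol(\Omega_1)^{(n-1)/n}}{\vol(H)}\le C\,\sup_i\left(\frac{r_{x_i}^n}{\vol(B_{r_{x_i}}(x_i)\cap\Omega)}\right)^{1/n},
\]
and then the factor $\vol(\Omega)^{1/n}$ in $\IN^*_n$ is absorbed via one more Bishop--Gromov comparison between $\vol(\Omega)$ and $\vol(B_{r_{x_i}}(x_i)\cap\Omega)$.

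Two further issues with your Step~2 as written. First, ``choosing $r$ of order $d$'' with a single fixed radius collapses the covering to essentially one ball and returns you to the global linear Cheeger inequality --- you would not recover the $(n-1)/n$-power. The variable radius $r_x$ is not an optional refinement but the engine of the whole argument. Second, the Type~B case does not close as sketched: for Type~B balls you only get $\vol(M_2\cap B_i)\le C r\,\vol(H\cap B_i)$, but there is no lower bound on $\vol(M_2\cap B_i)$ (those balls may lie almost entirely in $M_1$), so bounded multiplicity alone does not convert this into control of $\vol(H)$ in terms of $\vol(M_1)$. The paper's choice of $r_x$ makes each ball automatically ``balanced,'' eliminating this difficulty.

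Finally, a small point: it is not generally true that $B(x,r)\cap\Omega$ is geodesically convex when $r$ is of order $d$ (balls need not be convex beyond the convexity radius). The paper does not need convexity of $B\cap\Omega$; it only needs convexity of $\Omega$ itself, which guarantees that the minimal geodesic between any $x_1\in\Omega_1\cap B$ and $x_2\in\Omega_2\cap B$ stays in $\Omega$ and hence crosses $H$, which is exactly where convexity enters Gromov's lemma.
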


\begin{corollary}
Let $(M,g)$ be a complete Riemannian manifold of dimension $n$, with nonnegative Ricci curvature. Let $\Omega$ be a bounded convex domain. Then
\begin{equation}
\IN^*_n(\Omega)\le 40^n\cdot d
\end{equation}
where $d$ is the diameter of the domain $\Omega$. In particular, if $M$ is closed with diameter $d$, then
\begin{equation}
\IN^*_n(M)\le 40^n\cdot d.
\end{equation}
\end{corollary}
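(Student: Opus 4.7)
The corollary is the $K=0$ specialization of Theorem~\ref{Thm}, in which the exponential factor $e^{11(n-1)\sqrt{K}d}$ collapses to $1$; the closed-manifold statement follows by taking $\Omega=M$, since in a closed manifold any two points are joined by a minimizing geodesic and the ``convexity'' hypothesis is automatic. So the substantive task is to sketch Theorem~\ref{Thm}. Let $H$ divide $\Omega$ into $M_1\sqcup M_2$ with $\vol(M_1)\le\vol(M_2)$; the goal is $\vol(M_1)^{(n-1)/n}\vol(\Omega)^{1/n}\le 40^n e^{11(n-1)\sqrt{K}d}\,d\,\vol(H)$.

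The plan is a Vitali covering argument. For each $x\in M_1$, define the \emph{critical radius} $r(x)\in(0,d]$ to be the smallest $r$ at which $\vol(B(x,r)\cap M_1)=\vol(B(x,r)\cap M_2)$; this exists by continuity since $\vol(M_2)\ge\vol(M_1)$. Applying Vitali to $\{B(x,r(x))\}_{x\in M_1}$, I would extract a disjoint subfamily $\{B_i=B(x_i,r_i)\}$ whose $5$-enlargements cover $M_1$. Bishop-Gromov comparison, together with convexity of $\Omega$ (which ensures that radial geodesics from $x_i$ remain in $\Omega$ until they exit), yields a doubling-type bound $\vol(B(x_i,5r_i)\cap\Omega)\le 5^n e^{c_1(n-1)\sqrt{K}d}\,\vol(B_i\cap\Omega)$. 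On each $B_i$ the balanced volumes together with convexity force many geodesics joining $M_1\cap B_i$ to $M_2\cap B_i$ to stay in $\Omega$ and hence cross $H\cap B_i$; a Cheeger-Colding-type segment-integration estimate---using only the Ricci lower bound---then yields the \emph{local isoperimetric inequality} $\vol(H\cap B_i)\ge c_2(n,K,d)\,\vol(B_i\cap M_1)^{(n-1)/n}$.

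To finish, I would combine the cover estimate $\vol(M_1)\le\sum_i\vol(B(x_i,5r_i)\cap M_1)\le C_3(n,K,d)\sum_i\vol(B_i\cap M_1)$ with the elementary inequality $\sum_i a_i^{(n-1)/n}\ge(\max_i a_i)^{-1/n}\sum_i a_i$ (applied with $a_i=\vol(B_i\cap M_1)$ and $\max_i a_i\le\vol(M_1)$) to convert $\sum_i\vol(B_i\cap M_1)^{(n-1)/n}$ into $\vol(M_1)^{(n-1)/n}$ up to a constant; disjointness of the $B_i$ then gives $\sum_i\vol(H\cap B_i)\le\vol(H)$, so the local isoperimetric inequality yields $\vol(M_1)^{(n-1)/n}\le C_4(n,K,d)\vol(H)$. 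Multiplying by $\vol(\Omega)^{1/n}$ and using Bishop-Gromov $\vol(\Omega)\le c_n e^{(n-1)\sqrt{K}d}d^n$ (applied to any ball of radius $d$ centered in $\Omega$) completes the proof, and the corollary follows by setting $K=0$. The hardest step is the local isoperimetric inequality on balanced balls, because this is where the Ricci lower bound and the convexity of $\Omega$ must cooperate most delicately; a sharp execution of the segment integration there is what determines both the polynomial factor $40^n$ and the coefficient $11$ in the exponential.
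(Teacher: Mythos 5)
Your overall architecture — reduce to $K=0$, balance a ball at each $x\in M_1$, extract a disjoint Vitali subfamily whose enlargements cover $M_1$, prove a local estimate on each balanced ball, and sum — is exactly the paper's. But the local estimate you propose,
\[
\vol(H\cap B_i)\ \ge\ c_2(n,K,d)\,\vol(B_i\cap M_1)^{\frac{n-1}{n}},
\]
with $c_2$ depending only on $n,K,d$, is false and cannot be repaired in that form. Take $\Omega=B^{n-1}_\epsilon\times[0,1]\subset\mathbb{R}^n$, $H=B^{n-1}_\epsilon\times\{1/2\}$, and $x=(0,\dots,0,1/4)$: the balanced radius is $r_x=3/4$, $\vol(B_{r_x}(x)\cap M_1)\sim\epsilon^{n-1}$ and $\vol(H\cap B_{2r_x}(x))\sim\epsilon^{n-1}$, so the claimed inequality would require $\epsilon^{n-1}\gtrsim\epsilon^{(n-1)^2/n}$, which fails as $\epsilon\to0$. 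The obstruction is that the exponent $(n-1)/n$ implicitly encodes an ambient volume density $\vol(B_r\cap\Omega)/r^n$ that the constants $n,K,d$ do not control. The paper's local estimate is deliberately \emph{linear}: using a Gromov observation (more than half of $W_2$ is reached from a single point $x_1\in W_1$ by minimal geodesics crossing $H$ on the far half) together with Bishop--Gromov in polar coordinates at $x_1$, it proves
\[
\vol\big(B_{r_x}(x)\cap\Omega\big)\ \le\ 2^{n+2}\,r_x\,e^{(n-1)\sqrt{K}d}\,\vol\big(H\cap B_{2r_x}(x)\big),
\]
which scales correctly and is insensitive to thinness. The residual factor $r_{x_i}^n/\vol(B_{r_{x_i}}(x_i)\cap\Omega)$ that appears after summing is then absorbed at the very end, via convexity of $\Omega$ and Bishop--Gromov in the form $\vol(\Omega)\le\frac{\int_0^d\sinh^{n-1}(\sqrt{K}t)\,dt}{\int_0^{r}\sinh^{n-1}(\sqrt{K}t)\,dt}\,\vol(B_r(x)\cap\Omega)$, producing the final factor $d$. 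So the gap is concrete: replace your power-type local inequality (and the vague segment-integration appeal) with the linear inequality above, and defer the $(n-1)/n$-exponent bookkeeping to the final Bishop--Gromov step. Also note that the paper enlarges by a factor $5$ on the radius-$2r_x$ balls (i.e.\ uses $B_{10r_x}\supset\Omega_1$ with disjoint $B_{2r_{x_i}}$), not $B_{5r_x}$, which matters for the explicit constant $40^n$.
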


\begin{remark}
The case when $\Omega$ equals the whole manifold is well-known. 
The reference we mentioned earlier for convex domain in the literature deals with domains with (smooth) convex boundary which is a stronger condition. 
\end{remark}

\begin{remark}
For balls we can obtain both Dirichlet and Neumann isoperimetric constant estimates even under the much weaker  integral Ricci lower bound assumption \cite{DWZ16, Zhang}. On the other hand it is not clear if that will remain true for convex domains. 
\end{remark}
\begin{remark}
Using the mean curvature estimate from \cite{WW} one gets similar estimate  when the Bakry-Emery Ricci curvature is bounded from below and oscillation of the potential function is bounded. 
\end{remark}

%
%

\section{Proof of  Theorem~\ref{Thm}}

The proof goes by a covering argument of Anderson \cite{An92}, combined with an observation of Gromov \cite{Gr}. See \cite{An92} or \cite{DWZ16} for a similar argument of estimating the local Dirichlet isoperimetric constant. First of all we recall a lemma whose proof is a slight modification of Gromov's observation \cite[5.(C)]{Gr}.

\begin{lemma}
Let $M^n$ be a complete Riemannian manifold. Let $\Omega$ be a convex domain of $M$ and $H$ be any hypersurface dividing $\Omega$ into two parts $\Omega_1,\Omega_2$. For any Borel subsets $W_i\subset \Omega_i$, there exists $x_1$ in one of $W_i$, say $W_1$, and a subset $W$ in another one, $W_2$, such that
\begin{equation}
\vol(W)\ge\frac{1}{2}\vol(W_2)
\end{equation}
and any $x_2\in W$ has a unique minimal geodesic connecting to $x_1$ which intersects $H$ at some $z$ such that
\begin{equation}
\dist(x_1,z)\ge\dist(x_2,z).
\end{equation}
\end{lemma}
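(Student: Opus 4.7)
The plan is to run a Fubini-type argument on the product $W_1\times W_2$, following Gromov's observation. Since $\Omega$ is convex, any minimal geodesic of $M$ joining two points of $\Omega$ stays inside $\Omega$, and because $x_1\in\Omega_1$ and $x_2\in\Omega_2$ lie in different components of $\Omega\setminus H$, such a minimizing geodesic must cross $H$ at least once. The cut locus of any point has measure zero, so by Fubini almost every pair $(x_1,x_2)\in W_1\times W_2$ is joined by a unique minimal geodesic; denote by $z=z(x_1,x_2)\in H$ a chosen point where this geodesic meets $H$.

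Partition $W_1\times W_2$ (up to a null set) into the two measurable subsets
\begin{equation*}
A=\{(x_1,x_2):\dist(x_1,z)\ge\dist(x_2,z)\},\qquad A^c=\{(x_1,x_2):\dist(x_1,z)<\dist(x_2,z)\}.
\end{equation*}
At least one of them has measure $\ge\tfrac12\vol(W_1)\vol(W_2)$. If it is $A$, then applying Fubini to the characteristic function $\chi_A$ yields some $x_1\in W_1$ whose slice
\begin{equation*}
A_{x_1}:=\{x_2\in W_2:(x_1,x_2)\in A\}
\end{equation*}
satisfies $\vol(A_{x_1})\ge\tfrac12\vol(W_2)$. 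Setting $W:=A_{x_1}$ gives the desired conclusion with $x_1\in W_1$. If instead $A^c$ is the larger set, the same reasoning produces a point in $W_2$ and a subset of $W_1$ satisfying the analogous distance inequality; this corresponds exactly to the conclusion after swapping the roles of $W_1$ and $W_2$, which is what the phrase ``say $W_1$'' in the statement is there to accommodate.

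The main technical point to verify is measurability: one needs the crossing point $z(x_1,x_2)$ to be selectable in a measurable way so that $A$ is measurable. This follows from the continuity of the exponential map off the cut locus and standard measurable selection, with convexity of $\Omega$ guaranteeing that the geodesic actually meets $H$ so that a selection exists. Once this is in place the argument is essentially immediate, which is why it is reasonable to regard this lemma as a mild packaging of Gromov's observation rather than a substantive new estimate.
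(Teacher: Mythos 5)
Your argument is correct and is essentially the same as the one the paper relies on: it cites Gromov's observation (\cite[5.(C)]{Gr}), which is precisely this Fubini argument on $W_1\times W_2$ with the partition according to which endpoint is farther from the crossing point of the minimizing geodesic with $H$, convexity of $\Omega$ being used exactly as you use it to guarantee the geodesic meets $H$.
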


The convexity assumption of $\Omega$ is essential. It implies that any minimal geodesic with endpoints in different parts must intersects $H$. The Bishop-Gromov relative volume comparison theorem gives

\begin{lemma}
Let $H$, $W$ and $x_1$ be as in the lemma above. Then
\begin{equation}
\vol(W)\le 2^{n-1}De^{(n-1)\sqrt{K}D}\vol(H')
\end{equation}
where $D=\sup_{x\in W}\dist(x_1,x)$ and $H'$ is the set of intersection points with $H$ of geodesics $\gamma_{x_1,x}$ for all $x\in W$.
\end{lemma}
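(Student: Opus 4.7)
The plan is to use geodesic polar coordinates at $x_1$ and compare $\vol(W)$ to $\vol(H')$ ray-by-ray via Bishop-Gromov. Write $dV_M=J(r,\theta)\,dr\,d\theta$ in polar coordinates $(r,\theta)\in[0,\infty)\times S^{n-1}$ based at $x_1$. Each $x_2=\exp_{x_1}(r\theta)\in W$ is joined to $x_1$ by a unique minimal geodesic meeting $H$ at $z=\exp_{x_1}(s(\theta)\theta)$ with $\dist(x_1,z)\ge\dist(x_2,z)$; since $r=s(\theta)+\dist(x_2,z)$, this gives the key pointwise bound $r\le 2s(\theta)$. Let $\Theta_W\subset S^{n-1}$ be the set of these directions, so that $H'$ is parametrized by $\phi:\Theta_W\to H$, $\phi(\theta)=\exp_{x_1}(s(\theta)\theta)$.

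I would first bound $\vol(H')$ from below. Pick an orthonormal basis $\{e_i\}$ of $T_\theta S^{n-1}$. The Gauss lemma gives the orthogonal decomposition
\[ d\phi_\theta(e_i)=ds(e_i)\,\partial_r+s(\theta)\,d\exp_{x_1}|_{s(\theta)\theta}(e_i), \]
whence the pulled-back metric satisfies
\[ G_{ij}:=\langle d\phi(e_i),d\phi(e_j)\rangle=ds(e_i)\,ds(e_j)+s(\theta)^2\,B_{ij}, \]
where $B_{ij}=\langle d\exp_{x_1}|_{s(\theta)\theta}(e_i),d\exp_{x_1}|_{s(\theta)\theta}(e_j)\rangle$. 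Since $ds\otimes ds$ is positive semidefinite and $s(\theta)^{n-1}\sqrt{\det B}=J(s(\theta),\theta)$ by the definition of the polar volume density, we get $\sqrt{\det G}\ge J(s(\theta),\theta)$, and the area formula yields
\[ \vol(H')\ge\int_{\Theta_W}J(s(\theta),\theta)\,d\theta. \]

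For the upper bound on $\vol(W)$, Fubini writes $\vol(W)=\int_{\Theta_W}\int_{I_\theta}J(r,\theta)\,dr\,d\theta$ with $I_\theta\subset[s(\theta),2s(\theta)]$ by the opening observation. Pointwise Bishop-Gromov under $\Ric\ge-(n-1)K$ gives, for $r\ge s$,
\[ \frac{J(r,\theta)}{J(s,\theta)}\le\left(\frac{\sinh(\sqrt{K}\,r)}{\sinh(\sqrt{K}\,s)}\right)^{n-1}. \]
For $r\in[s,2s]$ with $s\le D$ this ratio is bounded by $(2\cosh(\sqrt{K}\,s))^{n-1}\le 2^{n-1}e^{(n-1)\sqrt{K}\,D}$. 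Since $|I_\theta|\le s(\theta)\le D$, combining the two estimates gives
\[ \vol(W)\le 2^{n-1}De^{(n-1)\sqrt{K}\,D}\int_{\Theta_W}J(s(\theta),\theta)\,d\theta\le 2^{n-1}De^{(n-1)\sqrt{K}\,D}\vol(H'), \]
as asserted.

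The only subtlety is the regularity of $\phi$ and the validity of the polar change of variables where $\exp_{x_1}$ meets its cut locus; the uniqueness of minimal geodesics from $x_1$ to $W$ inherited from the preceding lemma confines such degeneracies to a negligible set, so the area formula applies without modification.
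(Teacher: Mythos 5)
Your proof is correct and follows the paper's argument in all essentials: polar coordinates at $x_1$, the pointwise bound $r\le 2s(\theta)$ from $\dist(x_1,z)\ge\dist(x_2,z)$, and Bishop--Gromov comparison along each ray between radii $s(\theta)$ and $2s(\theta)$, with the same constant $2\cosh(\sqrt{K}D)\le 2e^{\sqrt{K}D}$. You additionally justify, via the Gauss lemma and the area formula, the inequality $\int_{\Theta_W}J(s(\theta),\theta)\,d\theta\le\vol(H')$ that the paper invokes without comment, which is a welcome clarification but not a different route.
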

\begin{proof}
Let $\Gamma\subset S_{x_1}$ be the set of unit vectors such that $\gamma_v=\gamma_{x_1,x_2}$ for some $x_2\in W$. We compute the volume in the polar coordinate at $x_1$. Write $dv=\mathcal{A}(\theta,t)d\theta\wedge dt$ in the polar coordinate $(\theta,t)\in S_{x_1}\times\mathbb{R}^+$. For any $\theta\in\Gamma$, let $r(\theta)$ be the radius such that $\exp_{x_1}(r\theta)\in H$. Then $W\subset\{\exp_{x_1}(r\theta)|\theta\in\Gamma,\,r(\theta)\le r\le 2r(\theta)\}$. So, by relative volume comparison,
\begin{eqnarray}
\vol(W)&\le&\int_\Gamma\int_{r(\theta)}^{2r(\theta)}\mathcal{A}(\theta,t)dtd\theta\nonumber\\
&\le&\frac{\sinh^{n-1}(2\sqrt{K}D)}{\sinh^{n-1}(\sqrt{K}D)}\int_{\Gamma}r(\theta)\mathcal{A}(\theta,r(\theta))
d\theta \nonumber\\
&\le&D\frac{\sinh^{n-1}(2\sqrt{K}D)}{\sinh^{n-1}(\sqrt{K}D)}\vol(H')\nonumber.
\end{eqnarray}
The required estimate follows from $\frac{\sinh(2t)}{\sinh t}=2\cosh t\le e^t$ whenever $t\ge 0$.
\end{proof}

\begin{corollary}
Let $H$ be any hypersurface dividing a convex domain $\Omega$ into two parts $\Omega_1$, $\Omega_2$. For any ball $B=B_r(x)$ we have
\begin{eqnarray}
\min\big(\vol(B\cap \Omega_1),\vol(B\cap \Omega_2)\big)\le 2^{n+1}re^{(n-1)\sqrt{K}d}\vol(H\cap B_{2r}(x))
\end{eqnarray}
where $d=\diam(\Omega)$. In particular, if $B\cap \Omega$ is divided equally by $H$, we have
\begin{eqnarray}  \label{iso-equ-ball}
\vol(B_r(x)\cap\Omega)\le 2^{n+2}re^{(n-1)\sqrt{K}d}\vol(H\cap B_{2r}(x))
\end{eqnarray}
\end{corollary}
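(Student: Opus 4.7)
The plan is to apply the two preceding lemmas with $W_i := B \cap \Omega_i$ for $i=1,2$, and to extract the key estimates from the constraint that the setup lives inside the small ball $B = B_r(x)$.

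First, I would invoke the first lemma with these choices of $W_1, W_2$ (assuming both have positive volume, otherwise the claim is trivial). This supplies, up to relabeling, a point $x_1 \in W_1$ and a subset $W \subset W_2$ with $\vol(W) \ge \tfrac{1}{2}\vol(W_2)$, together with, for each $x_2 \in W$, an intersection point $z \in H$ on the minimizing geodesic from $x_1$ to $x_2$ satisfying $\dist(x_1,z) \ge \dist(x_2,z)$.

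The second step is to check the geometric containment $H' \subset H \cap B_{2r}(x)$, where $H'$ is the set of these intersection points. Since $x_1, x_2 \in B_r(x)$ the triangle inequality gives $\dist(x_1,x_2) \le 2r$; because $z$ lies between $x_1$ and $x_2$ on a minimizing geodesic, $\dist(x_1,z) + \dist(x_2,z) = \dist(x_1,x_2)$, and the inequality $\dist(x_1,z) \ge \dist(x_2,z)$ then forces $\dist(x_2,z) \le r$. Hence $\dist(x,z) \le \dist(x,x_2) + \dist(x_2,z) < 2r$, as required. The same reasoning yields $D := \sup_{x_2 \in W} \dist(x_1,x_2) \le 2r$, while $D \le d$ holds separately because $\{x_1\} \cup W \subset \Omega$.

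The last step is to feed these into the second lemma, using $D \le 2r$ for the linear factor and $D \le d$ for the exponential factor, which gives
\begin{equation*}
\vol(W_2) \le 2\,\vol(W) \le 2 \cdot 2^{n-1} \cdot 2r \cdot e^{(n-1)\sqrt{K}\,d}\,\vol(H') \le 2^{n+1} r\, e^{(n-1)\sqrt{K}\,d}\, \vol(H \cap B_{2r}(x)).
\end{equation*}
Since $\min(\vol(W_1), \vol(W_2)) \le \vol(W_2)$ (and symmetrically in the other case of the first lemma), this is the first asserted inequality; the equal-division statement then follows immediately by doubling, because under that hypothesis $\min(\vol(W_1),\vol(W_2)) = \tfrac12 \vol(B\cap\Omega)$. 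The only mildly delicate point is the containment $H' \subset B_{2r}(x)$, which uses the interaction between $\dist(x_1,x_2) \le 2r$ and $\dist(x_1,z) \ge \dist(x_2,z)$ in an essential way; everything else is a direct application of the prior lemmas.
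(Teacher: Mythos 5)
Your proposal is correct and follows exactly the route the paper takes; the paper's own proof is the one-line remark ``Put $W_i = B\cap\Omega_i$ in the above lemma and notice that $D\le 2r$ and $H'\subset H\cap B_{2r}(x)$,'' and you have simply supplied the missing details (the triangle-inequality check that the intersection points lie in $B_{2r}(x)$, and the use of $D\le 2r$ for the linear factor together with $D\le d$ for the exponential one) correctly.
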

\begin{proof}
Put $W_i=B\cap \Omega_i$ in the above lemma and notice that $D\le2r$ and $H'\subset H\cap B_{2r}(x)$.
\end{proof}

Now we are ready to prove our main theorem.

\begin{proof}[Proof of Theorem \ref{Thm}]
We may assume that $\vol(\Omega_1)\le\vol(\Omega_2)$. For any $x\in\Omega_1$, let $r_x$ be the smallest radius such that
$$\vol(B_{r_x}(x)\cap\Omega_1)=\vol(B_{r_x}(x)\cap\Omega_2)=\frac{1}{2}\vol(B_{r_x}(x)\cap\Omega).$$
Let $d=\diam(\Omega)$. By above corollary,
\begin{equation}\label{hypersurface: 11}
\vol(B_{r_x}(x)\cap\Omega)\le 2^{n+2}r_xe^{(n-1)\sqrt{K}d}\vol(H\cap B_{2r}(x)).
\end{equation}
The domain $\Omega_1$ has a covering
$$\Omega_1\subset\bigcup_{x\in\Omega_1}B_{2r_x}(x).$$
By Vitali Covering Lemma, cf. \cite[Section 1.3]{LiYa}, we can choose a countable family of disjoint balls $B_i=B_{2r_{x_i}}(x_i)$ such that $\cup_i B_{10r_{x_i}}(x_i) \supset \Omega_1$.
Applying the relative volume comparison theorem and the convexity of $\Omega$ we have
\begin{eqnarray}
\vol(\Omega_1)&\le&\sum_i\frac{\int_0^{10r_{x_i}}\sinh^{n-1}(\sqrt{K}t)dt}{\int_0^{r_{x_i}}\sinh^{n-1}(\sqrt{K}t)dt}
\vol\big(B_{r_{x_i}}(x_i)\cap\Omega_1\big)\nonumber\\
&\le& 10\sum_i\frac{\sinh^{n-1}(10\sqrt{K}r_{x_i})}{\sinh^{n-1}(\sqrt{K}r_{x_i})}\vol\big(B_{r_{x_i}}(x_i)\cap\Omega_1\big)\nonumber\\
&\le&10\frac{\sinh^{n-1}(10\sqrt{K}d)}{\sinh^{n-1}(\sqrt{K}d)}\sum_i\vol\big(B_{r_{x_i}}(x_i)\cap\Omega_1\big)\nonumber\\
&\le&10^ne^{9(n-1)\sqrt{K}d}\sum_i\vol\big(B_{r_{x_i}}(x_i)\cap\Omega_1\big)\nonumber\\
&=&2^{-1}\cdot10^n\cdot e^{9(n-1)\sqrt{K}d}\sum_i\vol\big(B_{r_{x_i}}(x_i)\cap\Omega\big)\nonumber.
\end{eqnarray}
Moreover, since the balls $B_i$ are disjoint, (\ref{hypersurface: 11}) gives,
\begin{equation}\nonumber
\vol(H)\ge\sum_i\vol(B_i\cap H)\ge2^{-n-2}e^{-(n-1)\sqrt{K}d}\sum_ir_{x_i}^{-1}\vol(B_{r_{x_i}}(x_i)\cap\Omega).
\end{equation}
These two estimates lead to
\begin{eqnarray}
\frac{\vol(\Omega_1)^{\frac{n-1}{n}}}{\vol(H)}
&\le&2\cdot 20^{n}e^{10(n-1)\sqrt{K}d}\frac{\big(\sum_i\vol(B_{r_{x_i}}(x_i)\cap\Omega)\big)^{\frac{n-1}{n}}}
{\sum_ir_{x_i}^{-1}\vol(B_{r_{x_i}}(x_i)\cap\Omega)}\nonumber\\
&\le& 40^{n}e^{10(n-1)\sqrt{K}d}\frac{\sum_i\vol(B_{r_{x_i}}(x_i)\cap\Omega)^{\frac{n-1}{n}}}
{\sum_ir_{x_i}^{-1}\vol(B_{r_{x_i}}(x_i)\cap\Omega)}\nonumber\\
&\le&40^{n}e^{10(n-1)\sqrt{K}d}\sup_i\frac{\vol(B_{r_{x_i}}(x_i)\cap\Omega)^{\frac{n-1}{n}}}
{r_{x_i}^{-1}\vol(B_{r_{x_i}}(x_i)\cap\Omega)}\nonumber\\
&=&40^{n}e^{10(n-1)\sqrt{K}d}\sup_i\bigg(\frac{r_{x_i}^n}{\vol(B_{r_{x_i}}(x_i)\cap\Omega)}\bigg)^{\frac{1}{n}}.\nonumber
\end{eqnarray}
On the other hand, since  $\vol(\Omega_1)\le\vol(\Omega_2)$, we have $r_x\le d$ for any $x\in\Omega_1$. Thus, by the relative volume comparison and convexity of $\Omega$ again, we have
$$\vol(\Omega)\le\frac{\int_0^d\sinh^{n-1}(\sqrt{K}t)dt}{\int_0^{r_x}\sinh^{n-1}(\sqrt{K}t)dt}\vol(B_{r_x}(x)\cap\Omega).$$
Therefore, 
$$\vol(\Omega)^{\frac{1}{n}}\cdot\frac{\vol(\Omega_1)^{\frac{n-1}{n}}}{\vol(H)}
\le40^{n}e^{10(n-1)\sqrt{K}d}\sup_{0<r\le d}\bigg(\frac{r^n\int_0^d\sinh^{n-1}(\sqrt{K}t)dt}{\int_0^r\sinh^{n-1}(\sqrt{K}t)dt}\bigg)^{\frac{1}{n}}.$$
The last term on the right hand side has the estimate
$$\frac{r^n\int_0^d\sinh^{n-1}(\sqrt{K}t)dt}{\int_0^r\sinh^{n-1}(\sqrt{K}t)dt}\le r^n\cdot\frac{d}{r}\cdot\frac{\sinh^{n-1}(\sqrt{K}d)}
{\sinh^{n-1}(\sqrt{K}r)}\le d^n\cdot\frac{\sinh^{n-1}(\sqrt{K}d)}{(\sqrt{K}d)^{n-1}}\le d^ne^{(n-1)\sqrt{K}d}.$$
The required normalized Neumann isoperimetric constant estimate now follows.
\end{proof}


\begin{thebibliography}{99}
\bibitem{An92} M. T. Anderson, {\it The $L^2$ structure of moduli spaces of Einstein metrics on 4-manifolds}, GAFA., 2 (1992), 29-89.

\bibitem{Buser1982} P. Buser, {\it A note on the isoperimetric constant},   Ann. Sci. \'Ecole Norm. Sup. (4)  15  (1982), no. 2, 213 - 230.

\bibitem{Cheeger1970} J. Cheeger, {\em A lower bound for the
smallest eigenvalue of the Laplacian,}
in {\it Problems in Analysis}, R.~Gunning ed., 195-199. Princeton University Press,
1970.

\bibitem{CY81} J. Cheeger and S.T. Yau, {\it A lower bound for the heat kernel}, Comm. Pure Appl. Math., 34 (1981), 465-480.

\bibitem{cheng-yau}  S. Y. Cheng \& S. T. Yau, \emph{Differential equations
on Riemannian manifolds and their geometric applications}, Comm. Pure Appl.
Math. 28 (1975), 333-354.

\bibitem{DWZ16}X. Dai, G. Wei and Z. Zhang, {\it  Local Sobolev Constant Estimate for 
Integral Ricci Curvature Bounds },  arXiv:1601.08191. 

\bibitem{Ga88} S. Gallot, {\it Isoperimetric inequalities based on integral norms of Ricci curvature}, Soc. Math. de France, Asterisque 157-158 (1988), 191-216.

\bibitem{Gr} M. Gromov, {\it Paul Levy's isoperimetric inequality}, Publications IHES, 1980.

%

\bibitem{Li} P. Li, Geometric Analysis, Cambridge studies in advanced mathematics 134, Cambridge University Press, 2012.

\bibitem{Li-Yau1986} P. Li and S.T. Yau, {\em On the parabolic kernel of the Schrodinger operator}, Acta Math. 156 (1986) 153-
	201. 
\bibitem{LiYa} F.H. Lin and X.P. Yang, Geometric Measure Theorey-An Introduction, Science Press, Beijing/New York and Intern. Press, Boston, 2002.

\bibitem{Saloff-Coste1992} L. Saloff-Coste,{\it Uniformly elliptic operators on Riemannian manifolds} Journal of Diff. Geom. 36 (1992), 417-450.


%
%
\bibitem{WW}G. Wei and W. Wylie, {\em Comparison Geometry for the Bakry-Emery Ricci Tensor}, Journal of Diff. Geom. 83, no. 2 (2009), 377-405.
\bibitem{Ya92} D. Yang, {\it Convergence of Riemannian manifolds with integral bounds on curvature. I}, Ann. Scient. \`Ec. Norm. Sup., 25 (1992), 77-105.
\bibitem{Zhang} Z. Zhang, {\it Notes on the isoperimetric constant estimate}, preprint. 
\end{thebibliography}
\end{document}